  \newcommand{\beq}{\begin{equation}}
  \newcommand{\eeq}{\end{equation}}
\newcommand{\OM}{\Omega}
\newcommand{\DEL}{\Delta}
\newcommand{\la}{\lambda}
\newcommand{\RR}{\mathbb{R}}
\newcommand{\BR}{\mathbb{R}^{n}}
\newcommand{\lka}{\langle}
\newcommand{\rka}{\rangle}
  \newtheorem{theorem}{Theorem}[section]
\newtheorem{definition}[theorem]{Definition}
  \newtheorem{lemma}[theorem]{Lemma}
\newtheorem{corollary}[theorem]{Corollary}
\begin{document}

\begin{center}
\large{\textbf{Rellich-Kondrakov embedding of the Laplacian resolvent on the torus}}
\end{center}

\begin{center}
 Louis Omenyi \\
Department of Mathematics/Computer Science/Statistics/Informatics, \\
Federal University, Ndufu-Alike, Ikwo,  Nigeria.\\
\qquad\\
Email:  omenyi.louis@funai.edu.ng
\end{center}

\begin{abstract}
 This paper proves  that the domain of the Laplacian, $\DEL ,$  on a  closed Riemannian manifold,  $(M,g),$
   is compactly embedded in $L^{2} (M) .$   Particularly,   the resolvent of the Laplacian, $(\DEL + 1)^{-1},$
    is shown to be  compactly embedded on the torus.
\end{abstract}

\begin{center}
\textbf{Keywords:}   Laplacian;  Resolvent; Sobolev space; Compactly embedding; Riemannian manifold; Torus.
\end{center}
\textbf{MSC class:} 35P20; 35R01.

\section{Introduction}
The  concept of Sobolev spaces are well understood on some  compact and complete Riemannian
manifolds and on  locally compact groups such as the Heisenberg group, see e.g. \cite{6, 8, 9, 10} and  \cite{11}.
In this  paper, we are interested in Sobolev spaces in a general setting 
as a tool for a better understanding of pseudodifferential operators such as the Laplacian on Riemannian manifolds, \cite{1, 7}.
We shall show that the resolvent of the Laplacian is compactly embedded on the torus.

We begin by defining the Sobolev space of integer order  on smooth Riemannian manifolds and proceed to review 
 the Sobolev embedding theorem on  manifolds. Afterwards, we present the so-called Rellich-Kondrakov theorem 
on smooth compact torus which is the main result of the paper.

\section{Materials and Methods}
We gather the concepts of Sobolev space, Sobolev embedding theorems especially for Riemannian manifolds and 
 the Rellich-Kondrakov theorem in this section as tools for the proof of compact embedding  theorems of the
  Laplacian resolvent in the next section. Proofs of some of the theorems may be presented for purpose of completeness.

\subsection{Sobolev Space on  Riemannian manifold}
To begin with, let $\OM$ be an open subset of $\BR$ and $k$ an integer; $p \geq 1$ a real. Let $u : \OM \rightarrow \RR$
 be a real-valued smooth function. Following the works of \cite{1,2,3,4, 14, 13}, let 
\begin{equation}\label{s1}
|| u ||  _{k,p} : = \sum_{0 \leq   | \alpha | \leq  k }  [ \int _{\OM}| D^{\alpha} u| ^{p}  d x ]^{1/p} 
\end{equation}
where the distributional   derivatives 
\[ D^{\alpha} u    = (D^{\alpha _{1}}   \cdots D^{\alpha _{n}} )u ; ~ D_{j}  = \frac{1}{i} \frac{\partial }{\partial x_{j}}  ;\]
  and $\alpha$ is a multi-index. We define the following Sobolev spaces.
\begin{itemize}
\item[1.]   $H_{k} ^{p}$ is the completion of $\{ u \in  C^{\infty} ( \OM ) : || u ||_{k , p}  < \infty \}$ and  
\item[2.]   $W_{k} ^{p}  = \{ u \in L^{p} ( \OM ) :  D^{\alpha} u  \in L^{p} ( \OM ) ; | \alpha | \leq k \}.  $
\end{itemize}

\begin{theorem} \cite{1}. \label{ts1}
For any $\OM ,$ $k$ integer and $p \geq 1;$ $H_{k} ^{p} = W_{k} ^{p} .$
\end{theorem}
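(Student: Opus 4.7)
The statement is the classical Meyers--Serrin $H=W$ theorem, so my plan follows that well-known argument adapted to the notation of the paper.

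The inclusion $H_{k}^{p} \subseteq W_{k}^{p}$ is the easy direction, and I would handle it first. Given $u \in H_{k}^{p}$, by definition there is a sequence $u_{n} \in C^{\infty}(\OM)$ with $\|u_{n}\|_{k,p} < \infty$ that is Cauchy in the $\|\cdot\|_{k,p}$ norm and converges to $u$. Completeness of $L^{p}(\OM)$ gives $L^{p}$-limits $v_{\alpha}$ for each $D^{\alpha} u_{n}$ with $|\alpha| \leq k$. A short test against $C_{c}^{\infty}(\OM)$ and integration by parts identifies $v_{\alpha}$ with the distributional derivative $D^{\alpha} u$, placing $u$ in $W_{k}^{p}$ with matching norm.

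The substantive direction is $W_{k}^{p} \subseteq H_{k}^{p}$: given $u \in W_{k}^{p}$, I need to produce smooth approximants in the $\|\cdot\|_{k,p}$ norm. On all of $\RR^{n}$ one would simply mollify with a standard Friedrichs mollifier $\rho_{\EP}$ and use the classical fact that $D^{\alpha}(\rho_{\EP} * u) = \rho_{\EP} * D^{\alpha} u \to D^{\alpha} u$ in $L^{p}$. For a general open $\OM$ this fails near $\partial \OM$, so the plan is the Meyers--Serrin trick: exhaust $\OM$ by open sets $\OM_{j}$ with $\overline{\OM_{j}} \subset \OM_{j+1}$ and $\OM = \bigcup_{j} \OM_{j}$, choose a smooth partition of unity $\{\eta_{j}\}$ subordinate to the cover $\{\OM_{j+1} \setminus \overline{\OM_{j-1}}\}$, and mollify each compactly supported piece $\eta_{j} u$ with a parameter $\EP_{j}$ chosen small enough that $\rho_{\EP_{j}} * (\eta_{j} u)$ remains supported inside $\OM$ and satisfies
\[
\| \rho_{\EP_{j}} * (\eta_{j} u) - \eta_{j} u \|_{k,p} < \del \, 2^{-j}.
\]
The sum $u_{\del} := \sum_{j} \rho_{\EP_{j}} * (\eta_{j} u)$ is locally finite, hence smooth on $\OM$, and satisfies $\|u_{\del} - u\|_{k,p} < \del$. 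Letting $\del \to 0$ exhibits $u$ as a $\|\cdot\|_{k,p}$-limit of elements of $C^{\infty}(\OM)$ with finite Sobolev norm, proving $u \in H_{k}^{p}$.

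The main obstacle is the bookkeeping in the second step: one must verify that the Leibniz rule for the distributional derivatives $D^{\alpha}(\eta_{j} u)$ still produces $L^{p}$ functions (so that the mollification estimate is legitimate), and that the local finiteness of the cover makes $u_{\del}$ genuinely smooth rather than just a formal series. Everything else is a standard consequence of the convergence $\rho_{\EP} * f \to f$ in $L^{p}$ for $f \in L^{p}$ with compact support, and the commutation of mollification with distributional differentiation.
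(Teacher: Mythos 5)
The paper does not actually prove this theorem; it only cites Adams, and your proposal correctly reconstructs the classical Meyers--Serrin argument that the cited source uses (the easy inclusion $H_{k}^{p}\subseteq W_{k}^{p}$ via $L^{p}$-completeness and identification of distributional derivatives, and the converse via an exhaustion of $\OM$, a subordinate partition of unity, and mollification with locally adapted parameters $\EP_{j}$ summing to a total error below $\del$). Your plan is sound and matches the standard proof, including the two bookkeeping points (Leibniz rule keeping $D^{\alpha}(\eta_{j}u)$ in $L^{p}$ and local finiteness of the sum) that make the argument rigorous.
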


Now let $(M,g)$ be a smooth Riemannian $n$-dimensional manifold. For an integer $k$ and 
 \[ u : M \rightarrow \RR \]
  smooth, we denote by $\nabla ^{k} u$ the $k^{th}$ covariant derivative of $u$ and by $| \nabla ^{k} u |$ its norm. 
Specifically,
$$ \nabla ^{0} u = u ; $$
 $$  ~\nabla ^{1} u = ( \nabla  u )_{i} = \partial _{i} u \equiv \frac{\partial u}{\partial x_{i}} ; $$
$$  \nabla ^{2} u = ( \nabla ^{2} u )_{i j}= \partial _{ij} u - \Gamma _{ij}^{k}  \partial _{k} u = \frac{1}{\sqrt{|g|}} \sum _{i,j =1} ^{n} \partial _{i} ( \sqrt{|g|}) g^{ij} \partial _{j} u ; $$
$$ \nabla ^{3} u = ( \nabla ^{2} \nabla u )_{i j k}= \partial _{i} [ \partial _{jk}  - \Gamma _{jk}^{l}  \partial _{l} ] u$$
and so on; where  we recognize $ \nabla ^{2} = \DEL $  as the Laplacian on the manifold $(M,g)$    and
  \[\Gamma _{ij}^{k} : = \frac{1}{2} g^{mk} [ \partial _{i} g _{mj}  + \partial _{j} g _{mi} - \partial _{m} g _{ij} ] \]
 is the Christoffel symbol, see e.g \cite{5, 4}.
Suppose for now that $M = \BR $ with the Euclidean metric, 
we have
\[  \nabla ^{2} u = ( \nabla ^{2} u )_{i j}  = \sum _{i = 1} ^{n} \partial _{i} ^{2} u . \]
Besides, in local coordinates, the norm of $\nabla ^{k} u $ is expressed as 
\[ \| \nabla ^{k} u \| = g^{i_{1} j_{1}  \cdots  i_{k} j_{k} } ( \nabla ^{k} u )_{i_{1}   \cdots  i_{k} }   ( \nabla ^{k} u )_{j_{1}   \cdots  j_{k} } . \]

To define Sobolev space on $(M,g),$ we  set 
\begin{equation}\label{s4}
A_{k} ^{p} (M) = \{ u \in C^{\infty} (M) :  \int _{M} | \nabla ^{j} u | ^{p} dv (g) < \infty ; j = 0, \cdots , k \}.
\end{equation}
For $u \in A_{k} ^{p} (M) , $ we have 
\begin{equation}\label{s5}
|| u || _{H_{k} ^{p}}= \sum _{j =0}  ^{k} [ \int _{M} | \nabla ^{j} u | ^{p}  dv (g) ]^{1/p} 
\end{equation}
where $dv (g) = \sqrt{\det (g_{ij} )} dx $ and the Lebesgue volume measure of $\BR$   is  $dx$.

\begin{definition}
The Sobolev space  $H_{k} ^{p} (M)$ is the completion of  $A_{k} ^{p}(M)$ with respect to $|| \cdot || _{H_{k} ^{p}}$ defined in (\ref{s5}).
\end{definition}

One can look at  $ H_{k} ^{p}(M)$ as a subspace of $L^{p} (M) $ where for $u \in L^{p} (M) ,$ we write 
\[ || u || _{p} =  ( \int _{M} |u|^{p} )^{1/p} .\] 
 Also observe that when $p = 2 ,$ then
\[  || u || _{H_{k} ^{2}}= \sum _{j =0}  ^{k} [ \int _{M} | \nabla ^{j} u | ^{2}  dv (g) ]^{1/2} \]
with the associated inner product $\lka u , v \rka = \sum _{j = 0} ^{k} \int _{M}  \lka  \nabla ^{j} u , \nabla ^{j} v \rka  dv (g) .$

\begin{definition}\cite{4}. 
A real-valued function  $u$ on $M$ is called a Lipschitz function (or Lipschitzian)  if there exists a constant $c > 0$ such that for 
$x , y \in M,$ $|u(y) - u(x)| \leq c~ d_{g} (x,y).$
\end{definition}

We now  look at  smooth Riemannian manifolds. The next theorem is very essential in this regard.
\begin{theorem}\label{p1}\cite{3}
Let $(M,g)$ be a smooth Riemannian $n$-dimensional manifold and let \\ 
$u : M \rightarrow \RR $ be a Lipschitz function on $M$ with compact support.  Then, $u \in H_{1} ^{p} (M)$ for any $p \geq 1.$ 
 In particular, if $M$ is compact, any Lipschitz function in $M$ is also in $H_{1} ^{p} (M) $  too.
\end{theorem}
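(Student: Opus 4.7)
My plan is to reduce to the Euclidean case by a partition of unity and invoke Rademacher's theorem together with a mollification argument. First I would cover the compact set $\mathrm{supp}(u)$ by finitely many coordinate charts $(U_i,\varphi_i)$, $i=1,\ldots,N$, and pick a smooth partition of unity $\{\chi_i\}$ subordinate to this cover. Writing $u=\sum_{i=1}^N \chi_i u$, each $\chi_i u$ is Lipschitz with compact support inside the single chart $U_i$, because $u$ is bounded on its compact support and $\chi_i$ is smooth and compactly supported in $U_i$.

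Second, I would work in each chart: the pushforward $v_i := (\chi_i u)\circ\varphi_i^{-1}$ is a compactly supported Lipschitz function on the open set $\Omega_i := \varphi_i(U_i)\subset\BR$. By Rademacher's theorem $v_i$ is differentiable almost everywhere, its distributional partial derivatives coincide a.e.\ with the classical ones, and $|\partial_j v_i|\leq\mathrm{Lip}(v_i)$ a.e. Since $\mathrm{supp}(v_i)$ has finite Lebesgue measure, both $v_i$ and each $\partial_j v_i$ lie in $L^p(\Omega_i)$ for every $p\geq 1$, so $v_i\in W_1^p(\Omega_i)$. Convolving with a standard mollifier $\rho_\varepsilon$ yields smooth compactly supported approximants $v_{i,\varepsilon}$ with $v_{i,\varepsilon}\to v_i$ and $\partial_j v_{i,\varepsilon}\to\partial_j v_i$ in $L^p$, so Theorem~\ref{ts1} gives $v_i\in H_1^p(\Omega_i)$.

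Third, I would transfer back to $M$. The smooth functions $v_{i,\varepsilon}\circ\varphi_i$, extended by zero outside $U_i$, are smooth on $M$ and have compact support. Because $\mathrm{supp}(\chi_i)$ is compact, the metric coefficients $g_{ij}$, $g^{ij}$, $\sqrt{|g|}$ and the Christoffel symbols $\Gamma^{k}_{ij}$ are all uniformly bounded there, so the Euclidean $W_1^p$ norm and the intrinsic $\|\cdot\|_{H_1^p}$ norm are equivalent for functions supported inside $\mathrm{supp}(\chi_i)$. Summing $u_\varepsilon := \sum_{i=1}^N (v_{i,\varepsilon}\circ\varphi_i)$ produces a family of smooth functions on $M$ converging to $u$ in $\|\cdot\|_{H_1^p(M)}$, whence $u\in H_1^p(M)$. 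The compact case is then immediate since a Lipschitz function on a compact $M$ automatically has compact support.

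The most delicate step is the norm equivalence between the Euclidean $W_1^p$ and manifold $H_1^p$ norms on each chart: one must expand $|\nabla(\chi_i u)|$ in terms of Euclidean partials using the metric components and the $\Gamma^{k}_{ij}$ and show the resulting bounds are uniform on $\mathrm{supp}(\chi_i)$. Everything else (Rademacher, mollification, partition of unity, and the identification $H_1^p=W_1^p$ from Theorem~\ref{ts1}) is classical machinery applied piece by piece.
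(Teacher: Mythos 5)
The paper gives no proof of this theorem at all; it is stated with a bare citation to Aubin, so there is nothing internal to compare your argument against. Your proposal is correct and is essentially the standard argument from that reference: localize with a partition of unity, use Rademacher's theorem to place each chart piece in $W_{1}^{p}$, mollify, and pull back. One small simplification: the step you single out as most delicate is easier than you suggest, because for the $H_{1}^{p}$ norm only $u$ and its \emph{first} covariant derivative occur, and in coordinates $(\nabla u)_{i}=\partial_{i}u$ with $|\nabla u|^{2}=g^{ij}\partial_{i}u\,\partial_{j}u$ --- no Christoffel symbols enter at first order, so the norm equivalence on $\mathrm{supp}(\chi_{i})$ needs only uniform two-sided bounds on $g^{ij}$ and $\sqrt{|g|}$, which hold on any compact subset of a chart. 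The only other point worth stating explicitly is that for $\varepsilon$ small the mollified functions $v_{i,\varepsilon}$ remain compactly supported inside $\varphi_{i}(U_{i})$, so their pullbacks extend by zero to genuinely smooth functions on $M$; with that noted, the argument is complete.
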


The property of Sobolev embedding on smooth Riemannian $n$-dimensional manifold is summarised in the next theorem. 
We follow Aubin \cite{3} and Hebey \cite{2} to present a proof of the next theorem here for a purpose of completion.
We denote the space of test functions on a Riemannian manifold $M$  by $ \mathcal{D} (M)  := C_{0}^{\infty}(M)$  and its dual space by 
$ \mathcal{D}' (M).$

\begin{theorem}\label{t2}
Given   that  $(M,g)$  is a smooth  $n$-dimensional complete Riemannian  manifold,
 then $ \mathcal{D} (M) $ is dense in $ H_{1}^{p} (M)$ for any $p \geq 1 .$
\end{theorem}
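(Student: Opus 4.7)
The strategy is to approximate an arbitrary $u \in A_1^p(M)$ first by compactly supported Lipschitz functions obtained via cutoffs, and then to smooth those through a partition of unity and Euclidean mollifiers. Completeness of $(M,g)$ enters only to guarantee that distance balls are compact (Hopf--Rinow), which is what makes the cutoffs compactly supported.

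The first step is to fix $x_0 \in M$ and set $r(x) = d_g(x_0,x)$, which is $1$-Lipschitz. Pick a smooth $\phi : \RR \to [0,1]$ with $\phi \equiv 1$ on $(-\infty,1]$, $\phi \equiv 0$ on $[2,\infty)$, and $|\phi'| \leq 2$. Define
\beq
\eta_k(x) := \phi\!\lk r(x)/k \rk,
\eeq
so that $\eta_k$ is Lipschitz, $0 \leq \eta_k \leq 1$, $\mathrm{supp}(\eta_k) \subset \overline{B(x_0,2k)}$ (compact by Hopf--Rinow), and $|\nabla \eta_k| \leq 2/k$ almost everywhere. Put $u_k := \eta_k u$; since $u$ is smooth and $\eta_k$ is Lipschitz with compact support, $u_k$ is a compactly supported Lipschitz function on $M$, hence $u_k \in H_1^p(M)$ by Theorem \ref{p1}.

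Next I would check that $u_k \to u$ in $H_1^p(M)$. For the $L^p$-part, $|u_k - u|^p = (1-\eta_k)^p |u|^p \leq |u|^p \in L^1(M)$, and $u_k(x) \to u(x)$ pointwise, so dominated convergence applies. For the covariant-derivative part, the Leibniz rule gives $\nabla u_k = \eta_k \nabla u + u\,\nabla \eta_k$, hence
\beq
|\nabla u_k - \nabla u| \;\leq\; (1-\eta_k)|\nabla u| \;+\; |u|\,|\nabla \eta_k|.
\eeq
The first term goes to $0$ in $L^p(M)$ by dominated convergence. For the second, $|u\,\nabla \eta_k|^p \leq (2/k)^p |u|^p \mathbf{1}_{B(x_0,2k) \setminus B(x_0,k)}$, which is dominated by $(2/k)^p|u|^p \in L^1$ and tends to zero pointwise, so again dominated convergence finishes the job.

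To upgrade each $u_k$ to a test function, I would cover the compact set $\mathrm{supp}(u_k)$ by finitely many coordinate charts $(U_i,\varphi_i)_{i=1}^N$, choose a smooth partition of unity $\{\chi_i\}$ subordinate to this cover, and write $u_k = \sum_i \chi_i u_k$. Each $\chi_i u_k$ pushes forward to a compactly supported Lipschitz function on an open subset of $\BR$, where by Theorem \ref{ts1} together with standard Friedrichs mollification $J_\epsilon \ast (\chi_i u_k)$ yields a $C_0^\infty$ approximation in $W^{1,p}$. Transporting back, the finite sum of these smoothings gives an element of $\mathcal{D}(M)$ which, as $\epsilon \downarrow 0$, approximates $u_k$ in $H_1^p(M)$ (the metric $g$ is uniformly comparable to the Euclidean metric on each fixed chart, so the two $W^{1,p}$-norms are equivalent there). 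A diagonal extraction in $k$ and $\epsilon$ then produces a sequence in $\mathcal{D}(M)$ converging to $u$ in $H_1^p(M)$.

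The main obstacle is the cutoff estimate: one must use completeness in an essential way, since only then does $\mathrm{supp}(\eta_k)$ sit inside a compact set and only then does the uniform bound $|\nabla \eta_k| \leq 2/k$ translate into $L^p$-smallness of the commutator term $u\,\nabla \eta_k$. The mollification step, by contrast, is purely local and reduces to the Euclidean density statement of Theorem \ref{ts1}.
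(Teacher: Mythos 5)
Your argument is correct and follows essentially the same route as the paper: cut off $u$ by a Lipschitz function of the distance from a fixed point (using completeness so the cutoffs have compact support, and Theorem \ref{p1} to place them in $H_1^p(M)$), show the cutoffs converge to $u$ in $H_1^p(M)$, and then smooth each cutoff locally to land in $\mathcal{D}(M)$. The only differences are cosmetic --- you use a smooth profile with the $2/k$ gradient bound where the paper uses a piecewise-linear profile and lets the annular support escape to infinity, and you spell out the partition-of-unity/mollification step that the paper merely asserts.
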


\begin{proof}
Let $ f : \RR \rightarrow \RR $ be defined by     
\begin{equation*}
f(t) = 
\begin{cases}
1 & \text{~if}~ t \leq 0 \\
1 - t& \text{~if} ~0 \leq t \leq 1\\
0 & \text{~if} ~ t \geq 1 . \\
\end{cases}
\end{equation*}
Let $u \in A_{1} ^{p} (M); ~ p \geq 1$ real. For $x , y \in M ,$ set $u_{j} (y) = u (y) f \big( d_{g} (x, y) - j \big)$
  where $d_{g}$ is the distance associated to $g$ and $j \in \mathbb{Z}.$
By theorem \ref{p1} above, $u_{j} \in H_{1}^{p}(M)$ for any $j .$ Now, since $u_{j} =0 $
 outside any compact set $\OM \subset M ,$ we have that for any $j;$ $u_{j}$ is the limit in  $ H_{1}^{p}(M)$ 
 of some sequence of functions  in  $\mathcal{D}(M) .$  So, if
\[ (u_{m} ) \in  A_{1} ^{p} (M) \rightarrow u_{j} \in H_{1}^{p}(M) \] 
 and if $\alpha \in \mathcal{D}(M), $ then, 
 \[ (\alpha u_{m} ) \in A_{1} ^{p} (M) \rightarrow \alpha u_{j} \in H_{1}^{p}(M) . \]
Consequently, we choose $\alpha \in \mathcal{D}(M) $ such that $\alpha = 1$ where $u_{j} \neq 0.$  
So independently, we have for any $j,$ 
\[  \big( \int _{M} | u_{j} - u |^{p} d v (g) \big) ^{1/p} \leq   \big( \int _{M \setminus B_{r}(j)} | u |^{p} d v (g)  \big)^{1/p}  \]
and
\[    \big( \int _{M} | \nabla (u_{j} - u )|^{p} d v (g)   \big) ^{1/p} \leq  \big( \int _{M \setminus B_{r}(j)} | \nabla u |^{p} d v (g)  \big)^{1/p}
  +  \big( \int _{M \setminus B_{r}(j)} |  u |^{p} d v (g)  \big) ^{1/p} . \]
Hence, $(u_{j}) \rightarrow u \in H_{1}^{p}(M) $ as $j \rightarrow \infty .$  
That is to say, $u$ is the limit in $ H_{1}^{p}(M) $ of some sequence in $ \mathcal{D}(M) .$
\end{proof}

\subsection{Sobolev Embedding}
Given two normed vector spaces $(E, || \cdot ||_{E} ) $ and  $(F, || \cdot ||_{F} ) $ with $E \subset F,$ we say 
that $E$ is continuously embedded in $F$ denoted by $E \hookrightarrow F, $  if   there exist some constant $c > 0$ such that 
for any $x \in E ,$ we have $|| x ||_{F}  \leq  c || x || _{E} .$
We demonstrate this concept for compact smooth Riemannian $n$-dimensional manifolds. 
First, note that the embedding is said to be compact if bounded subsets of  $(E, || \cdot ||_{E} ) $ are pre-compact (relatively compact)
 in $(F, || \cdot ||_{F} ) .$ Clearly, if the embedding $E \hookrightarrow F $ is compact, it is  continuous; see e.g.  \cite{1, 3, 12, 2, 5}.
\begin{theorem}\label{t3}
Let $(M,g)$ be a compact smooth Riemannian $n$-dimensional manifold. For any real numbers $1 \leq q < p$ and integers $0\leq m < k$  satisfying   $\frac{1}{p} = \frac{1}{q} - \frac{k -m}{n} ,$ 
we have $$ H_{k}^{q} (M) \hookrightarrow H_{m}^{p} (M) .$$
\end{theorem}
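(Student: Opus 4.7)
The plan is to reduce Theorem \ref{t3} to the classical Sobolev embedding on Euclidean domains by means of a finite partition of unity. Since $M$ is compact, I would fix a finite atlas $(\OM_\alpha, \phi_\alpha)_{\alpha = 1}^{N}$ with each image $\phi_\alpha(\OM_\alpha) \subset \BR$ a bounded open set, together with a subordinate partition of unity $\{\eta_\alpha\} \subset \mathcal{D}(M)$ satisfying $\sum_\alpha \eta_\alpha \equiv 1$ and $\operatorname{supp}\eta_\alpha \subset \OM_\alpha$. By the definition of $H_k^q(M)$ as a completion, it suffices to establish the norm estimate on the dense subspace of smooth $u$. For such $u$, I decompose $u = \sum_\alpha \eta_\alpha u$ and transport each piece to Euclidean space via $u_\alpha := (\eta_\alpha u) \circ \phi_\alpha^{-1}$, which is compactly supported inside $\phi_\alpha(\OM_\alpha)$.

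Next, I would show that the intrinsic norm $\|\eta_\alpha u\|_{H_k^p(M)}$ is comparable to the Euclidean norm $\|u_\alpha\|_{W_k^p(\phi_\alpha(\OM_\alpha))}$, uniformly in $\alpha$. This follows by expressing each covariant derivative $\NA^j u$ in local coordinates as a linear combination of Cartesian partial derivatives $D^\beta u$ for $|\beta| \leq j$, whose coefficients depend polynomially on the components of $g^{-1}$ and on the Christoffel symbols together with their derivatives up to order $j - 1$. On the compact closure $\overline{\phi_\alpha(\OM_\alpha)}$ all of these quantities are smooth and $g$ is bounded between positive constants, so the comparison constants are uniform. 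Moreover $dv(g) = \sqrt{|g|}\,dx$ with $\sqrt{|g|}$ bounded above and below, so the $L^p$ norms with respect to $dv(g)$ and to Lebesgue measure are equivalent. Theorem \ref{ts1} then identifies $H_k^q$ with $W_k^q$ in the Euclidean setting.

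The core analytical input is the classical Euclidean Sobolev embedding: for $1 \leq q < p$ and integers $0 \leq m < k$ satisfying $1/p = 1/q - (k - m)/n$, every $v \in W_k^q(\BR)$ of compact support obeys $\|v\|_{W_m^p(\BR)} \leq C \|v\|_{W_k^q(\BR)}$. I would derive this by induction from the Gagliardo-Nirenberg-Sobolev inequality $\|v\|_{L^{q^*}} \leq C \|\NA v\|_{L^q}$ with $1/q^* = 1/q - 1/n$, applied successively to partial derivatives $D^\beta v$ up to order $k - m - 1$. Combining this with the local-global norm equivalence yields, for each $\alpha$, the chain
\[ \|\eta_\alpha u\|_{H_m^p(M)} \leq C_1 \|u_\alpha\|_{W_m^p(\BR)} \leq C_2 \|u_\alpha\|_{W_k^q(\BR)} \leq C_3 \|\eta_\alpha u\|_{H_k^q(M)} \leq C_4 \|u\|_{H_k^q(M)}, \]
where the last step uses the Leibniz rule applied to $\NA^j(\eta_\alpha u)$ together with the smoothness and boundedness of $\eta_\alpha$ and its covariant derivatives. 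Summing over the finitely many $\alpha$ and applying the triangle inequality gives the target $\|u\|_{H_m^p(M)} \leq C \|u\|_{H_k^q(M)}$.

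The main obstacle is the base Gagliardo-Nirenberg-Sobolev inequality on $\BR$, which is the deepest analytical step. It is classically proved first in the case $q = 1$ via a Fubini-type argument exploiting $|v(x)| \leq \int_{-\infty}^{x_i} |\partial_i v|$ coordinate by coordinate, and then extended to $1 < q < n$ by applying the $q = 1$ estimate to $|v|^\gamma$ for a suitable exponent $\gamma$. A secondary bookkeeping difficulty is expressing $\NA^j$ in coordinates as a polynomial in Cartesian partials and Christoffel symbols when $j \geq 2$; once this is written out explicitly the norm comparison on a precompact chart is routine, but the combinatorics involved in tracking all orders simultaneously is the more tedious part of the argument.
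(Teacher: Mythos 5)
Your proposal follows the same overall strategy as the paper: cover the compact manifold by finitely many charts on which $g$ is uniformly comparable to the Euclidean metric, localize with a subordinate partition of unity, transport to $\BR$, and invoke the Euclidean Gagliardo--Nirenberg--Sobolev inequality. The one substantive difference is how the general exponents $(k,m,q,p)$ are handled. The paper asserts that it suffices to prove the single base case $H_{1}^{1}(M)\hookrightarrow L^{n/(n-1)}(M)$ ``since $H_{k}^{q}(M)\hookrightarrow H_{1}^{1}(M)$,'' and then carries out the chart argument only for that case; as written, composing those two embeddings yields only $H_{k}^{q}\hookrightarrow L^{n/(n-1)}$, so the reduction to the full statement is left implicit. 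You instead prove the general case directly on each chart by iterating the Gagliardo--Nirenberg--Sobolev inequality over the derivatives $D^{\beta}v$ with $|\beta|\leq k-m-1$ and handling $q>1$ via the standard $|v|^{\gamma}$ trick, which supplies exactly the bookkeeping the paper omits. Your version is therefore the more complete argument; the paper's is shorter but relies on an unproved (though standard) reduction. One small point to watch in your write-up: when iterating, you should note that the condition $\frac{1}{p}=\frac{1}{q}-\frac{k-m}{n}>0$ guarantees that every intermediate Sobolev conjugate exponent is finite, so each application of the base inequality is legitimate.
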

\begin{proof}
It is enough to prove that $H_{1} ^{1} (M) \hookrightarrow L^{n/(n-1)} $ is valid since for 
 \[ p > 1, ~ H_{k}^{q} (M) \hookrightarrow H_{1}^{1} (M) . \]
Now since $M$ is compact, it can be covered by a finite number of charts $ ( \mathcal{U} _{m} , \psi _{m}) _{m = 1, \cdots , N}$ 
such that for any $m,$  the components  $g_{ij}$  of  $g$  in   $ ( \mathcal{U} _{m} , \psi _{m}) $ 
satisfy 
\[ \frac{1}{2}  \delta _{ij} \leq g_{ij} \leq 2 \delta _{ij}\] 
 as bilinear forms where $ \delta _{ij}$ is the usual metric on $\BR .$  The constant  $c = 2$  can be  chosen for convenience.  

Again, let $( \eta _{m} )$ be a smooth partition of unity subordinate to $( \mathcal{U} _{m} ).$ For any $u \in C^{\infty} (M) ,$ we have 
\[  \int _{M} |\eta _{m} u  |^{n/(n-1)} d v (g) \leq 2^{n/2} \int _{\BR} |( \eta _{m} u ) \circ \psi _{m} ^{- 1} (x) |^{n/(n-1)} dx    \]
and
\[  \int _{M} |\nabla (\eta _{m} u  )|^{n/(n-1)} d v (g) \geq 2^{(n+1)/2} \int _{\BR} |\nabla (( \eta _{m} u ) \circ \psi _{m} ^{- 1} )(x) |dx  .   \]
Hence,
\[  \big( \int _{\BR} |( \eta _{m} u ) \circ \psi _{m} ^{- 1} (x) |^{n/(n-1)} dx \big) ^{(n-1)/n} 
  \leq  \frac{1}{2} \int _{\BR} |\nabla (( \eta _{m} u ) \circ \psi _{m} ^{- 1} )(x) |dx \]
for any $m .$  Consequently, for any $ u \in C^{\infty} (M) ,$
\[ \big( \int _{M} | u  |^{n/(n-1)} d v (g)  \big) ^{(n- 1)/n}   \leq   \sum _{m = 1} ^{N} \big( \int _{M} | \eta _{m} u | ^{n/(n-1)}
 d v (g) \big)^{(n -1 )/n}  \]
\[  \leq  2^{n-1 } \sum _{m = 1} ^{N} \int _{M} | \nabla ( \eta _{m} u ) | d v (g)  
  \leq  2^{n-1 } \int _{M} | \nabla u  | d v (g) + 2^{n -1} \big( \max _{M} \sum _{m = 1}^{N} | \nabla \eta _{m} | \big) \int _{M} |u| dv(g) \]
which gives the result.
\end{proof}

\subsection{Rellich-Kondrakov theorem}
Now, we are set to present the Rellich-Kondrakov theorem (which may also be called Rellich theorem for simplicity).
\begin{theorem}\label{t4}
Let $(M,g)$ be a compact smooth Riemannian $n$-dimensional manifold. For any integers $j \geq 0;~ m \geq 1 ;$
 and real numbers $q \geq 1,~ p$ such that  $1 \leq p \leq \frac{nq}{n - mq} ,$ the embedding 
\[ H_{j + m} ^{q} (M) \hookrightarrow H_{j} ^{p} (M) \]
  is compact. 
In particular, for any $q \in [ 1, n )$ real and any $p \geq 1$ such that    $ \frac{1}{p} > \frac{1}{q} - \frac{1}{n} ,$ the embedding $H_{1}^{q}(M) \hookrightarrow L^{p}(M)$ is compact.
\end{theorem}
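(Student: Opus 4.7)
The plan is to reduce the general claim to its ``in particular'' consequence, namely compactness of the embedding $H_1^q(M) \hookrightarrow L^p(M)$ for $q \in [1,n)$ and $p$ satisfying $\frac{1}{p} > \frac{1}{q} - \frac{1}{n}$. Once this scalar first-order case is in hand, the general statement will follow by applying it to each covariant derivative $\nabla^i u$ ($i \leq j$) of a bounded family in $H_{j+m}^q(M)$, iterated $m$ times so as to drop one order of derivative at a time, and chained with the continuous Sobolev embeddings from Theorem~\ref{t3} in order to interpolate between intermediate Sobolev spaces. Thus the whole argument hinges on establishing the first-order scalar compactness on $(M,g)$.

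For that, I would localize using the same apparatus as in the proof of Theorem~\ref{t3}: a finite cover of $M$ by charts $(\mathcal{U}_m,\psi_m)_{m=1,\dots,N}$ in which $\frac{1}{2}\delta_{ij}\leq g_{ij}\leq 2\delta_{ij}$, together with a smooth partition of unity $(\eta_m)$ subordinate to that cover. Given a bounded sequence $(u_k)\subset H_1^q(M)$, each pull-back $v_k^{(m)}=(\eta_m u_k)\circ\psi_m^{-1}$ becomes a sequence bounded in $W^{1,q}$ on a fixed bounded open subset of $\BR$ and supported inside it. The uniform metric comparability renders both $\|\cdot\|_{L^p(M)}$ and $\|\nabla\cdot\|_{L^q(M)}$ equivalent, up to constants, to sums over $m$ of the corresponding Euclidean norms of the $v_k^{(m)}$, so strong $L^p$-convergence of subsequences in each chart can be patched back to $M$ through the decomposition $u_k = \sum_m \eta_m u_k$.

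On each chart I would invoke the Euclidean Rellich--Kondrakov theorem, whose proof rests on the Kolmogorov--Riesz--Fr\'echet compactness criterion in $L^p$. The uniformly bounded support handles tightness at infinity. For the equicontinuity of translations, the first-order estimate $\|\tau_h v - v\|_{L^q}\leq |h|\,\|\nabla v\|_{L^q}$ yields it in $L^q$ uniformly over the family, and combining this with the uniform $L^{p^\ast}$ bound (where $p^\ast = nq/(n-q)$) coming from the continuous Sobolev embedding, a standard interpolation between $L^q$ and $L^{p^\ast}$ upgrades the equicontinuity of translations to any subcritical $p < p^\ast$. A diagonal extraction across the $N$ charts then produces a single subsequence converging strongly in $L^p(M)$.

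The main obstacle is the interpolation step that converts $L^q$ translation equicontinuity into $L^p$ translation equicontinuity: this is precisely where the strict subcritical condition $\frac{1}{p}>\frac{1}{q}-\frac{1}{n}$ is essential, and one must check that the interpolation constants remain uniform across the bounded family rather than depending on individual functions. By contrast, the chart patching, and the lifting from the scalar $H_1^q$-case up to the general $H_{j+m}^q \hookrightarrow H_j^p$, are essentially bookkeeping once one has the uniform metric comparability and the iterative application of the scalar result to the covariant derivatives, combined with Theorem~\ref{t3} to bridge intermediate exponents.
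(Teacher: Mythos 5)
Your proposal is correct and its core is the same as the paper's: cover $M$ by finitely many charts in which $\tfrac12\delta_{ij}\le g_{ij}\le 2\delta_{ij}$, use a subordinate partition of unity to push a bounded sequence into compactly supported functions on bounded Euclidean domains, extract convergent subsequences chart by chart, and patch back via the triangle inequality $\|f_{m_1}-f_{m_2}\|_p\le\sum_s\|\eta_s f_{m_1}-\eta_s f_{m_2}\|_p$. You go further than the paper in two places. First, the paper black-boxes the chart-level compactness as Lemma~\ref{l6}, citing Aubin, whereas you actually prove it via the Kolmogorov--Riesz--Fr\'echet criterion: the translation estimate $\|\tau_h v-v\|_{L^q}\le|h|\,\|\nabla v\|_{L^q}$ plus interpolation against the uniform $L^{p^\ast}$ bound, with $p<p^\ast$ guaranteeing a positive interpolation exponent so the modulus of continuity is uniform over the bounded family --- this is the standard proof of that lemma and is where the strict inequality $\tfrac1p>\tfrac1q-\tfrac1n$ enters, exactly as you say. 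Second, the paper's proof in fact only treats the first-order case $H_1^q(M)\hookrightarrow L^p(M)$ and never returns to the general claim $H_{j+m}^q(M)\hookrightarrow H_j^p(M)$; your plan to lift the scalar case by applying it to the covariant derivatives $\nabla^i u$ and chaining with Theorem~\ref{t3} fills that genuine omission (modulo the bookkeeping that $\nabla^i u$ is tensor-valued, which localizes to components in each chart). One caveat on the statement rather than on your argument: compactness fails at the critical exponent, so the general claim as printed with $p\le\frac{nq}{n-mq}$ needs the strict inequality that your interpolation step in any case requires.
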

To prove the  Rellich theorem, we need the following lemma. 
\begin{lemma}\label{l6}
Let $\OM $ be a bounded open subset of $\BR ; ~ q \in [1 ,n )$  real such that $\frac{1}{p} > \frac{1}{q} - \frac{1}{n} .$ Then, the embedding $H_{0,1}^{q}(\OM) \hookrightarrow L^{p}(\OM)$ is compact; where 
$H_{0,1} (\OM )$ denotes the closure of $\mathcal{D} (\OM ) \subset H_{1}^{q}(\OM ) .$
\end{lemma}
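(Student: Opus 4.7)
The plan is to verify the Fr\'echet--Kolmogorov (Riesz--Kolmogorov) compactness criterion in $L^p(\Omega)$. Let $(u_m)\subset H_{0,1}^{q}(\Omega)$ be bounded, say $\|u_m\|_{H_1^q}\leq C$. Because $H_{0,1}^{q}(\Omega)$ is the closure of $\mathcal{D}(\Omega)$ in $H_1^q$, each $u_m$ extends by zero to a function in $H_1^q(\BR)$ supported in $\overline{\Omega}$ with unchanged norm; I work throughout with these zero-extensions. First I would invoke the Euclidean Sobolev inequality (the flat model underlying Theorem \ref{t3}) to get $\sup_m \|u_m\|_{L^{q^*}} < \infty$, where $q^* = nq/(n-q)$. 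Since $\Omega$ is bounded and the hypothesis $\frac{1}{p} > \frac{1}{q} - \frac{1}{n}$ means precisely $p < q^*$, H\"older's inequality yields $\sup_m \|u_m\|_{L^p(\Omega)} < \infty$, and the supports being contained in the fixed bounded set $\overline{\Omega}$ disposes of the uniform tail condition automatically.

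The heart of the proof is uniform translation equicontinuity: $\sup_m \|u_m(\cdot+h) - u_m\|_{L^p} \to 0$ as $|h|\to 0$. I would first establish this in $L^q$ using the identity $v(x+h) - v(x) = \int_0^1 \nabla v(x+th)\cdot h\, dt$ for $v\in \mathcal{D}(\BR)$. Applying Jensen's inequality to the $dt$-average and integrating in $x$ gives
\[ \|v(\cdot+h) - v\|_{L^q(\BR)} \leq |h|\,\|\nabla v\|_{L^q(\BR)}. \]
By the density of $\mathcal{D}(\Omega)$ in $H_{0,1}^{q}(\Omega)$ this inequality extends to every $u_m$ and yields the uniform bound $\|u_m(\cdot+h) - u_m\|_{L^q(\BR)} \leq C|h|$.

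To pass from the $L^q$-estimate to the required $L^p$-estimate I would interpolate. For $1\leq p \leq q$, H\"older on the bounded support $\overline{\Omega}$ reduces the problem directly to the $L^q$ case. For $q < p < q^*$, I write $\frac{1}{p} = \frac{\theta}{q} + \frac{1-\theta}{q^*}$ with $\theta \in (0,1)$ (possible precisely because $p<q^*$) and apply the H\"older-type interpolation
\[ \|u_m(\cdot+h) - u_m\|_{L^p} \leq \|u_m(\cdot+h) - u_m\|_{L^q}^{\theta} \|u_m(\cdot+h) - u_m\|_{L^{q^*}}^{1-\theta}. \]
The first factor is $O(|h|^\theta)$ uniformly in $m$, the second is uniformly bounded by the Sobolev step, so the left-hand side tends to zero uniformly in $m$ as $|h|\to 0$.

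With the three hypotheses of the Fr\'echet--Kolmogorov criterion -- uniform $L^p$ boundedness, uniform tail control, uniform translation equicontinuity -- all verified, the set $\{u_m\}$ is relatively compact in $L^p(\BR)$; extracting a convergent subsequence gives compactness of the embedding into $L^p(\Omega)$. The real obstacle is the role of the \emph{strict} inequality $p < q^*$: compactness is known to fail at the critical exponent, and the argument uses the strictness essentially through the positivity of the interpolation exponent $\theta$, which is what transfers the $|h|$-smallness from $L^q$ to $L^p$.
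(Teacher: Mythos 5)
Your proof is correct, but note that the paper itself does not prove Lemma \ref{l6} at all: it simply refers the reader to Aubin \cite{3}, whose own argument runs through mollification and the Ascoli--Arzel\`a theorem rather than the Riesz--Kolmogorov criterion. Your route is therefore a genuinely different, self-contained alternative, and all the essential steps are in place: the zero-extension of elements of $H_{0,1}^{q}(\Omega)$ to $W^{1,q}(\mathbb{R}^{n})$ with preserved norm, the Gagliardo--Nirenberg--Sobolev bound in $L^{q^{*}}$ with $q^{*}=nq/(n-q)$, the translation estimate $\|v(\cdot+h)-v\|_{L^{q}}\leq |h|\,\|\nabla v\|_{L^{q}}$ proved on $\mathcal{D}(\mathbb{R}^{n})$ and extended by density, and the interpolation $\|f\|_{L^{p}}\leq\|f\|_{L^{q}}^{\theta}\|f\|_{L^{q^{*}}}^{1-\theta}$ with $\theta>0$ precisely because $p<q^{*}$; together with the common compact support these verify the three hypotheses of the Fr\'echet--Kolmogorov theorem. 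Your closing remark correctly isolates where the strict subcriticality $\frac{1}{p}>\frac{1}{q}-\frac{1}{n}$ enters (through $\theta>0$), which is exactly the point at which compactness fails at the critical exponent. What each approach buys: the paper's citation buys brevity at the cost of leaving its key lemma unproved; yours makes the proof explicit and elementary, at the cost of one external input (the Riesz--Kolmogorov compactness criterion, which you should state or cite precisely). The only detail worth spelling out is why the zero-extension of a general element of the closure of $\mathcal{D}(\Omega)$ lies in $W^{1,q}(\mathbb{R}^{n})$ with the same norm; this follows by taking $H_{1}^{q}$-limits of test functions, but it deserves a sentence.
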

For proof, one can see Aubin  \cite{3}. Next is the proof of theorem \ref{t4}.
\begin{proof}
Since $M$ is compact, it can be covered by a finite number of charts $( \mathcal{U} _{s}, \psi _{s} )_{s = 1, \cdots , N}$ such that for any $s,$ the components $g_{ij} ^{s}$ of $g$ in  $( \mathcal{U} _{s}, \psi _{s} ) $ 
satisfy  $\frac{1}{2} \delta _{ij} \leq g_{ij} ^{s} \leq 2 \delta _{ij} $ as bilinear forms.
Let $ (\eta _{s}) $ be a smooth partition of unity subordinate to the covering  $( \mathcal{U} _{s} ) .$ Given $ ( f_{m} )$ a bounded sequence in $  H_{1} ^{q} (M)  $ and for any $s ,$ we let $f_{m} ^{s} = ( \eta _{s} f_{m}) \circ \psi _{s}^{- 1}.$
Clearly, $(f_{m} ^{s} )$ is a bounded sequence in  $H_{0,1}^{q}(\mathcal{U} _{s}, \psi _{s} )) $ for any $s .$

By lemma \ref{l6}, one then gets that a subsequence $(\tilde{f}_{m} ^{s} )$ of $(f_{m} ^{s} )$ is a Cauchy sequence in $L^{P} ( \psi _{s}( \mathcal{U} _{s} ) ) .$ 

Coming back to the inequality satisfied by the   $ g_{ij} ^{s} , $ one gets for any $s,$   $(\eta  _{s}  f_ {m} ) $ is a Cauchy sequence in $L^{p} (M).$ But for any $m_ {1}, m_{2} ,$
$$ || f_{m_{1}} - f_{m_{2}} ||_{p} \leq  \sum _{s =1} ^{N} ||\eta _{s} f_{m_{1}} - \eta _{s} f_{m_{2}} ||_{p}; $$
where $|| \cdot ||_{p}$  denotes the $L^{p}$-norm. Hence, $( f_{m} )$ is a Cauchy sequence in $L^{p} (M).$ This proves the theorem.
\end{proof}

\begin{corollary}
The embedding $H_{0}^{1} (M) \cap H_{1}^{2} (M)  \hookrightarrow H_{1}^{1} (M)  \hookrightarrow L^{2} (M) $ is compact.
\end{corollary}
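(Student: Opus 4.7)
The plan is to reduce the claimed compactness to Theorem \ref{t4} by factoring the overall embedding through $H_1^2(M)$ rather than through $H_1^1(M)$. The inclusion $H_0^1(M)\cap H_1^2(M)\hookrightarrow H_1^2(M)$ is tautologically continuous (it is the projection of the intersection onto one of its factors), and Theorem \ref{t4} applied with $j=0$, $m=1$, $q=p=2$ yields the compact embedding $H_1^2(M)\hookrightarrow L^2(M)$; indeed the hypothesis $\tfrac{1}{p}>\tfrac{1}{q}-\tfrac{1}{n}$ then reduces to $0>-\tfrac{1}{n}$, which holds in every dimension. Composing a continuous operator with a compact one is compact, so the corollary follows.

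To fit this argument inside the chain of embeddings as written, I would additionally verify that $H_0^1(M)\cap H_1^2(M)\hookrightarrow H_1^1(M)$ is continuous. Since $M$ is compact it has finite volume, so by Cauchy--Schwarz one has the continuous inclusion $L^2(M)\hookrightarrow L^1(M)$ with constant $\sqrt{\mathrm{vol}(M)}$. Any $u\in H_1^2(M)$ therefore has both $u$ and $\nabla u$ in $L^1(M)$, placing $u$ in $H_1^1(M)$ with $\|u\|_{H_1^1}\le \sqrt{\mathrm{vol}(M)}\,\|u\|_{H_1^2}$. Together with the compact embedding $H_1^2(M)\hookrightarrow L^2(M)$ already obtained, this realises the entire chain as a sequence of continuous maps whose composition is compact.

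The only subtle point worth flagging is the choice of pivot space. One is tempted to try to extract the compactness directly from the middle arrow $H_1^1(M)\hookrightarrow L^2(M)$, but the ``In particular'' clause of Theorem \ref{t4} applied with $q=1$ forces the target exponent to satisfy $p<\tfrac{n}{n-1}$, which excludes $p=2$ as soon as $n\ge 2$. Hence the $L^2$-gradient control furnished by $H_1^2(M)$ is essential: it is precisely the stronger integrability $\nabla u\in L^2(M)$ (as opposed to merely $L^1(M)$) that makes Rellich-Kondrakov applicable with target $L^2(M)$. Once this factorisation is fixed no genuine obstacle arises, and the remainder of the proof is routine functorial bookkeeping.
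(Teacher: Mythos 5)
Your proposal is correct, and in substance it takes the same route as the paper --- deduce the corollary from the Rellich--Kondrakov theorem --- but the paper's own proof consists of the single sentence ``Follows from the Rellich--Kondrakov theorem proved in lemma \ref{l6}'' (citing, moreover, the Euclidean Lemma \ref{l6} rather than the manifold version, Theorem \ref{t4}), whereas you actually carry out the reduction. The substantive thing you add is the choice of pivot: you correctly observe that the middle arrow $H_{1}^{1}(M)\hookrightarrow L^{2}(M)$ is \emph{not} delivered by Theorem \ref{t4} once $n\geq 2$ (the target exponent is capped at $n/(n-1)<2$), so the compactness must be extracted from $H_{1}^{2}(M)\hookrightarrow L^{2}(M)$ instead, with the trivial continuous inclusion of the intersection into $H_{1}^{2}(M)$ in front and the finite-volume inclusion $L^{2}\hookrightarrow L^{1}$ supplying the continuity of $H_{0}^{1}(M)\cap H_{1}^{2}(M)\hookrightarrow H_{1}^{1}(M)$. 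This is exactly the repair the paper's one-line proof glosses over. One small caveat: the ``In particular'' clause of Theorem \ref{t4} as stated requires $q\in[1,n)$, so taking $q=2$ is literally outside its hypotheses when $n\leq 2$; in those dimensions you should instead invoke the general clause (for $n=2$, where $n-mq=0$ and the admissible range of $p$ is unbounded) or note that the conclusion holds a fortiori by the standard form of Rellich--Kondrachov. This is a defect of the paper's formulation of the theorem rather than of your argument, but it is worth flagging if you want the citation to be airtight in every dimension.
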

\begin{proof}
Follows from  the  Rellich-Kondrakov theorem proved in lemma \ref{l6}.
\end{proof}

\section{Results and Discussion}
The compactness of  $ (\DEL + 1)^{ -1} $  on  $( M, g ) $  is more conveniently  discussed on Fourier space, \cite{12, 13}  and  \cite{15}.
So we continue the discussion on  the Sobolev space. The Sobolev space $H^{s} (\BR )$  is  defined by
\[  H^{s} (\BR ) = \{ u \in \mathcal{S}^{\prime} (\BR ) : (1 + | \xi |^{2})^{s}| \hat{u} |^{2} \in L^{2}(\BR) \} \]
where $\hat{u}$ is the Fourier transform of $u .$
This means that  a function $u \in   H^{s} (\BR ) $ provided 
$\int _{\BR }   ( 1 + | \xi |^{2} )^{s}      | \hat{u} ( \xi ) |^{2}  d \xi < \infty $ with 
$$|| u ||^{2} _{H^{s}} = \int _{\BR }   ( 1 + | \xi |^{2} )^{s}      | \hat{u} ( \xi ) |^{2}  d \xi .$$

Now, define $ \mathbb{T} ^{n}  = \BR /2 \pi \mathbb{Z}^{n} = ( S^{1} ) ^{n}$ to be the  $n$-dimensional unit torus.
 A function  $u \in \mathbb{T} ^{n }$  can be expressed via Fourier series as
 \[ u (x) = \sum _{\xi \in \mathbb{Z}^{n} } \hat{u} ( \xi ) e^{i \xi \cdot x}  
~~ \mathrm{where} ~~ \hat{u} = ( 2 \pi ) ^{- n } \int _{\mathbb{T} ^{n}} u(x) e^{- i \xi \cdot x}  dx . \]
For any function $u \in \mathcal{D} ^{\prime} ( \mathbb{T} ^{n} ) ,$  we can write  
\[ \hat{u} = ( 2 \pi ) ^{- n }  \lka  u(x) , e^{- i \xi \cdot x}  \rka \]
  so that by Plancherel theorem
\[ \sum _{\xi \in \mathbb{Z}^{n} } | \hat{u} ( \xi ) |^{2} =   ( 2 \pi ) ^{- n } \int _{\mathbb{T} ^{n}} | u(x)|^{2}   dx \]
 and
  \[ D^{\alpha} u(x) = \sum _{\xi \in \mathbb{Z}^{n} }  \xi ^{\alpha } \hat{u} ( \xi ) e^{i \xi \cdot x} . \]
So, it is now clear that $u \in C^{\infty} ( \mathbb{T} ^{n} )$ if and only if  $  \hat{u} $ is a rapidly decreasing function in 
 $\mathbb{Z}^{n} .$  That is, for $s \in \RR ,$
\[ \sup _{\xi \in \mathbb{Z}^{n} }   ( 1 + | \xi |^{2} )^{s}      | \hat{u} ( \xi ) |^{2}  d \xi < \infty . \]
  By duality,  $u \in \mathcal{D} ^{\prime} ( \mathbb{T} ^{n} ) $  provided $\hat{u} $ is polynomially bounded function, i.e.
\[ | \hat{u} |  =  c ( 1 + | \xi |^{2} )^{l} \] 
 for  $c > 0, ~ l \in \RR .$

Let   $T$ defined by  $T = ( \DEL +1 )^{- 1}$  be the Laplacian resolvent on $L^{2} (\mathbb{T} ^{n} ) $ where we know that 
\begin{eqnarray*}
\mathcal{F} : L^{2} (\mathbb{T} ^{n} ) \rightarrow l^{2} (\mathbb{T} ^{n} ) ~ \mathrm{and} ~ \\
 T = \mathcal{F} ^{ - 1}  ( 1 + | \xi |^{2} )^{-1} \mathcal{F} ; \\
   \Rightarrow || T || = 1.  
\end{eqnarray*}

The next theorem is the main result of this work.
\begin{theorem}
T is a compact operator on   $ l^{2} (\mathbb{T} ^{n} ) .$
\end{theorem}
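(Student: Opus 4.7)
The plan is to exploit the Fourier diagonalisation displayed just before the theorem statement, which represents $T$ on $l^{2}(\mathbb{Z}^{n})$ as the multiplication (diagonal) operator whose symbol is the bounded sequence $m(\xi) = (1+|\xi|^{2})^{-1}$. The key analytic fact is that $m(\xi) \to 0$ as $|\xi| \to \infty$. The standard route is then to approximate $T$ in operator norm by finite-rank truncations and appeal to the fact that the ideal of compact operators on a Hilbert space is norm-closed.

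Concretely, for each $N \in \mathbb{N}$ I would define $T_{N}$ to be the Fourier multiplier with symbol
\[
m_{N}(\xi) \;=\; \frac{1}{1+|\xi|^{2}}\,\chi_{\{|\xi|\leq N\}}(\xi),
\]
where $\chi_{\{|\xi|\leq N\}}$ is the indicator of the lattice ball. Since $\{\xi \in \mathbb{Z}^{n} : |\xi|\leq N\}$ is finite, the range of $T_{N}$ is finite-dimensional, so each $T_{N}$ is of finite rank and in particular compact.

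Next I would bound $\|T-T_{N}\|_{\mathrm{op}}$. For diagonal operators on $l^{2}(\mathbb{Z}^{n})$ the operator norm coincides with the supremum of $|m-m_{N}|$, whence
\[
\|T-T_{N}\|_{\mathrm{op}} \;=\; \sup_{|\xi|>N}\frac{1}{1+|\xi|^{2}} \;\leq\; \frac{1}{1+(N+1)^{2}} \;\longrightarrow\; 0 \quad \text{as } N \to \infty.
\]
Because the set of compact operators is closed in the operator-norm topology, $T=\lim_{N\to\infty} T_{N}$ is compact on $l^{2}(\mathbb{T}^{n})$.

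A more conceptual alternative, which makes direct use of the machinery developed earlier, is to observe that $T=(\DEL+1)^{-1}$ maps $L^{2}(\mathbb{T}^{n})$ boundedly into $H_{2}^{2}(\mathbb{T}^{n})$ by a one-line Fourier estimate, and then to invoke Theorem \ref{t4} (Rellich--Kondrakov) to conclude that the inclusion $H_{2}^{2}(\mathbb{T}^{n}) \hookrightarrow L^{2}(\mathbb{T}^{n})$ is compact; the composition is then compact. I do not anticipate a genuine obstacle beyond keeping the Fourier and spatial pictures of $T$ properly aligned and justifying that the norm of a Fourier multiplier on $l^{2}(\mathbb{Z}^{n})$ equals the $l^{\infty}$-norm of its symbol.
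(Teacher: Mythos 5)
Your primary argument is essentially the paper's own proof: truncate the Fourier multiplier to the finite set $\{|\xi|\leq N\}$ to get finite-rank operators $T_{N}$, bound $\|T-T_{N}\|$ by the supremum of the tail of the symbol, and conclude by norm-closedness of the compact operators. Your writeup is in fact cleaner than the paper's (which garbles the tail estimate with stray $l^{p}$ norms); the only nitpick is that for $\xi\in\mathbb{Z}^{n}$ the condition $|\xi|>N$ only guarantees $|\xi|^{2}\geq N^{2}+1$, not $|\xi|^{2}\geq (N+1)^{2}$, but the bound $\frac{1}{1+N^{2}}\to 0$ suffices anyway.
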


\begin{proof}
It enough to show that $T$ is the limit of  finite rank operators. To do this, define
\begin{equation*}
 ( T_{N} u ) ( \xi ) = 
\begin{cases}
( 1 + | \xi |^{2} )^{-1} u_{\xi} & ; ~ | \xi |  \leq  N \\
0 & ; ~ | \xi |  >  N . \\
\end{cases}
\end{equation*} 
for $N > 0 $ and $ u_{\xi}  \in l^{2} (\mathbb{T} ^{n} ). $ Then,
\begin{equation*}
 (T -  T_{N} ) \psi _{\xi} = 
\begin{cases}
( 1 + | \xi |^{2} )^{-1} \psi _{\xi}  & ; ~ | \xi |  \geq  N + 1\\
0 & ; ~ | \xi |  \leq   N . \\
\end{cases}
\end{equation*}
So, 
$$ ||T - T_{N} || = \frac{1}{( N + 1 )^{2} + 1} .$$
But, 
\begin{eqnarray*}
     (T - T_{N}  ) \psi _{\xi}  & \leq  & \frac{1}{( N + 1 )^{2} + 1}  || \psi _{\xi}||  \\
    \Rightarrow   \big( \sum _{\xi \in  \mathbb{Z}^{n} }  | ( 1 + | \xi |^{2} )^{-1} \psi _{\xi} |^{p} \big)^{1/p} 
& \leq & \frac{1}{ (N^{2} +1 )^{p}}   \sum _{\xi \in  \mathbb{Z}^{n} }   | \psi _{\xi} |^{1/p} \\
 & =&   \frac{1}{ N^{2} +1}  \sum _{ |\xi  | \geq N + 1 }   | \psi _{\xi} |^{p}  \\
 &  \leq &  \frac{1}{ N +1}  || \psi _{\xi} ||_{l^{p}} \rightarrow 0  ~~~\mathrm{as~~ N ~~\rightarrow ~~\infty .} 
\end{eqnarray*}
  Therefore, $T = ( \DEL + 1 ) ^{-1}$ is compact in   $ l^{2} (\mathbb{T} ^{n} ) . $
\end{proof}

Another result  of the Rellich-Kondrakov theorem for the torus is the next theorem.
\begin{theorem}
Suppose $T$ is a self-adjoint operator (or has self-adjoint extension) with compact resolvent, 
then $T$ has discrete spectrum.
\end{theorem}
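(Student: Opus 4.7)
The plan is to reduce the spectral analysis of the (possibly unbounded) self-adjoint operator $T$ to that of its compact self-adjoint resolvent, and then invoke the spectral theorem for compact self-adjoint operators. First I would fix a real number $\lambda_{0} \in \rho(T)$ in the resolvent set of $T$; since $T$ is self-adjoint, every non-real number belongs to $\rho(T)$, and by hypothesis the resolvent $R := (T - \lambda_{0} I)^{-1}$ is compact. Because $T$ is self-adjoint and $\lambda_{0}$ is real, $R$ is a bounded self-adjoint operator on the Hilbert space, and it is compact by assumption.

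Next I would apply the Hilbert--Schmidt spectral theorem to $R$: the spectrum $\sigma(R)$ consists of at most countably many real eigenvalues $\{\mu_{n}\}$, each of finite multiplicity, whose only possible accumulation point is $0$, and there is an orthonormal basis $\{e_{n}\}$ of the Hilbert space consisting of eigenvectors of $R$. The key algebraic step is the spectral correspondence: $R e_{n} = \mu_{n} e_{n}$ with $\mu_{n} \neq 0$ is equivalent to $e_{n}$ lying in the domain of $T$ with $T e_{n} = (\lambda_{0} + \mu_{n}^{-1}) e_{n}$. Conversely, if $\lambda \in \sigma(T)$ with $\lambda \neq \lambda_{0}$ and $T u = \lambda u$, then $R u = (\lambda - \lambda_{0})^{-1} u$, so $(\lambda - \lambda_{0})^{-1} \in \sigma(R)$; this shows the eigenspaces match, and hence multiplicities agree.

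From this correspondence I would conclude that $\sigma(T) = \{\lambda_{0} + \mu_{n}^{-1}\}_{n}$, which is a sequence of real eigenvalues each of finite multiplicity. Since $\mu_{n} \to 0$, the eigenvalues of $T$ satisfy $|\lambda_{0} + \mu_{n}^{-1}| \to \infty$, so they have no finite accumulation point. This is precisely the statement that $T$ has discrete spectrum: every point of $\sigma(T)$ is an isolated eigenvalue of finite multiplicity, and the essential spectrum of $T$ is empty.

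The main obstacle I expect is the careful justification of the spectral correspondence between $T$ and its resolvent when $T$ is unbounded, namely ensuring that all of $\sigma(T)$ (not merely the eigenvalues but also any continuous or residual spectrum) is captured by $\sigma(R)$ via the map $\lambda \mapsto (\lambda - \lambda_{0})^{-1}$, so that compactness of $R$ genuinely rules out accumulation in $\sigma(T)$. The cleanest way is to verify the operator identity $(\lambda I - T)^{-1} = R (I - (\lambda - \lambda_{0}) R)^{-1}$ on $\rho(T) \cap \rho(R_{\rm shift})$, which shows that $\lambda \in \rho(T)$ whenever $(\lambda - \lambda_{0})^{-1} \in \rho(R)$, and this then transfers the discreteness of $\sigma(R)$ directly to $\sigma(T)$.
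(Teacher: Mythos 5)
Your proposal is correct and follows essentially the same route as the paper: both pass to the compact self-adjoint resolvent at a real point of the resolvent set and transfer its nonzero eigenvalues $\mu_{k}$ to eigenvalues $\lambda_{0}+\mu_{k}^{-1}$ of $T$ tending to infinity (the paper takes $\lambda_{0}=1$, writing $\lambda_{k}=(1+\mu_{k})/\mu_{k}$). Your version is in fact more careful than the paper's, which never invokes the spectral theorem for compact self-adjoint operators to justify an orthonormal eigenbasis with $\mu_{k}\to 0$, and never checks that the whole spectrum of $T$ (not merely its point spectrum) is exhausted by these eigenvalues --- exactly the two gaps you identify and close.
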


\begin{proof}
It is straightforward to see that 
\begin{eqnarray*}
 (T - \mathrm{id})^{-1} \psi _{k} &=& \mu _{k} \psi _{k} \\
       \Rightarrow  \psi _{k} &=& \mu _{k} (T - \mathrm{id}) \psi _{k} \\
  \Rightarrow  \psi _{k}  +   \mu _{k} \psi _{k}  &=& \mu _{k} T \psi _{k} \\
  \Rightarrow T \psi _{k} &=& \frac{ ( \mathrm{id} + \mu _{k})     }{\mu _{k}} \psi _{k} .
\end{eqnarray*}
Set 
\[ \la _{k} = \frac{ ( \mathrm{id} + \mu _{k})     }{\mu _{k}} \] 
 and write 
\[ T \psi _{k}  =  \la _{k} \psi _{k} . \]
We see that as $ \mu _{k} \rightarrow 0 , $ $ \la _{k} \rightarrow \infty $ which confirms the compactness
of the resolvent $(T - \mathrm{id})^{-1}$ and that $T$ has discrete spectrum 
$\{  \la _{k} = \frac{ ( \mathrm{id} + \mu _{k})     }{\mu _{k}}  \} .$ 
\end{proof}

\section{Conclusion}
The paper has given a  proof  that the Laplacian resolvent operator is compact on the unit torus by means
of the Fourier transform. We further found that given a compact Riemannian manifold,  $(M,g)$,  that 
the embedding $H_{0}^{1} (M) \cap H_{1}^{2} (M)  \hookrightarrow H_{1}^{1} (M)  \hookrightarrow L^{2} (M) $ is compact.

For the Laplacian resolvent, $ (\DEL + 1)^{ -1} ,$  we consequently showed  that as a compact operator on the torus, it has discrete spectrum.
Thus, $ (\DEL + 1)^{ -1} $ satisfies the Rellich-Kondrakov theorem  on  $ \mathbb{T} ^{n} .$

\section{Conflict of Interest}
There is no conflict of interest.


\begin{thebibliography}{99}
\bibitem{1} R.  A. Adams (1975), \textsl{Sobolev Spaces.}  Academic Press, New York.
\bibitem{3} T. Aubin (1982),  \textsl{ Nonlinear Analysis on Manifolds. Monge-Amp$\grave{e}$re Equations.}  
Springer-Verlag; Heidelberg. 
\bibitem{13}  S. Bando  and  H.  Urakawa  (1983), \textsl{Generic Properties of 
the Eigenvalues of the Laplacian for Compact Riemannian Manifold.}  T$\hat{o}$hoku Math.Journ. 35,  pp.155-172.
\bibitem{15}   R. Camporesi (1990), \textsl{Harmonic Analysis and Propagators on 
Homogeneous  spaces.} Physics Reports (Review section of Physics Letters) 196. Nos 1 $\&$ 2, North-Holland, pp.1-34.
\bibitem{14}  I. Chavel (1984),   \textsl{Eigenvalues in  Riemannian Geometry.}  Academic Press Inc; London.
\bibitem{4}  A.  Grigor'yam  (2009), \textsl{Heat Kernels and Analysis on Manifolds. 
 Studies in Advance Mathematics.} Volume 47; American Mathematical Society International Press, United States of America.
\bibitem{6}  P. G´orka, H. Prado and E.G. Reyes  (2011), \textsl{Nonlinear equations with infinitely many derivatives.
Complex Analysis and Operator Theory.} 5, pp.313–323.
\bibitem{8}  K. Gr$\ddot{o}$chenig and T. Strohmer  (2007), \textsl{Pseudodifferential operators on locally compact abelian
groups and Sjstrand's symbol class.}  J. Reine Angew. Math. 613, pp.121-146.
\bibitem{9}  P. Haj$\l$asz  (1996), \textsl{Sobolev spaces on an arbitrary metric space.}  Potential Anal. 5, no. 4, pp.403-415.
\bibitem{2}  E.  Hebey  (1996),  \textsl{Sobolev Spaces on Riemannian Manifolds. Lecture Notes in Mathematics.}  
Springer-Verlag,  Heidelberg.
\bibitem{5}  D. Hoffman and J. Spruck (1974),  \textsl{Sobolev and isoperimetric inequalities for Riemannian
submanifolds}, Comm. Pure Appl. Math. 27, pp.715-727
\bibitem{7} 	L. Omenyi  (2016);  \textsl{Meromorphic continuation of the spectral zeta kernel.}  Gen. Math. 
Notes, Vol. 36, No. 2, pp.26-37.
\bibitem{10} J.J. Rodr$\grave{\imath}$guez-Vega andW. A. Z´u˜niga-Galindo (2010), \textsl{Elliptic pseudodifferential equations and
Sobolev spaces over p-adic fields.}  Pacific Journal of Mathematics Vol. 246 , No.2,  pp.407-420.
\bibitem{11} M. Ruzhansky and V. Turunen (2010), \textsl{Pseudo-Differential Operators and Symmetries. Background
Analysis and Advanced Topics.}  Birkh¨auser, Basel - Boston - Berlin.
\bibitem{12} M. E. Taylor  (1996), \textsl{  Partial Differential Equations.} Vol. I. Springer-Verlag,  Heidelberg.
\end{thebibliography}
\end{document}